\newtheorem{theorem}{Theorem}[section]
\newtheorem{lemma}[theorem]{Lemma}
\newtheorem{e-definition}[theorem]{Definition\rm}
\newtheorem{question}[theorem]{Question}
\newcommand\comp{\mathbin{\scriptscriptstyle\circ}}%composition
\newcommand\Div{\mathrm{Div}}
\newcommand\dive{\mathbin{\preccurlyeq}}
\newcommand\Exp[2]{\vert#1\vert_{#2}}
\newcommand\HS[1]{\hspace{#1ex}}
\newcommand\opR{\HS{0.2}{\star}\HS{0.2}}%YB cyclic operation
\newcommand\pdots{\HS{0.2}{\cdot}{\cdot}{\cdot}\HS{0.2}}
\newcommand\PRES[2]{\langle#1\,\vert\, #2\rangle}%Presented group
\newcommand\resp{\mbox{\it resp}.\ }
\newcommand\tta{\mathtt{a}}
\newcommand\ttb{\mathtt{b}}
\newcommand\ttc{\mathtt{c}}
\newcommand\wdots{, ...\HS{0.2},}
\begin{document}

\title[Coxeter-like groups for groups of Yang--Baxter equation]{Coxeter-like groups for groups of set-theoretic solutions of the Yang--Baxter equation}
\author{Patrick Dehornoy}
\email{patrick.dehornoy@unicaen.fr}
\address{Laboratoire de Math\'ematiques Nicolas Oresme, CNRS UMR6139, Universit\'e de Caen, 14032 Caen cedex}

\maketitle

\begin{abstract}
We attach with every finite, involutive, nondegenerate set-theoretic solution of the Yang--Baxter equation a finite group that plays for the associated structure group the role that a finite Coxeter group plays for the associated Artin--Tits group. 
\end{abstract}

%%%%
\section{Introduction}

A \emph{set-theoretic solution of the Yang--Baxter equation} (YBE) is a pair~$(X, R)$ where $R$ is a bijection from~$X^2$ to itself satisfying~$R^{12} R^{23} R^{12} = R^{23} R^{12} R^{23}$, in which $R^{ij} : X^3 \to X^3$ corresponds to $R$ acting in positions~$i$ and~$j$. Set-theoretic solutions of YBE provide particular solutions of the (quantum) Yang--Baxter equation, and received some attention in recent years.

A set-theoretic solution $(X,R)$ of YBE is called \emph{involutive} for$R^2 = \mathrm{id}$, and \emph{nondegenerate} if, writing $R(x, y) = (R_1(x, y), R_2(x, y))$, the maps $y \mapsto R_1(x, y)$ and $x \mapsto R_2(x, y)$ are one-to-one. In this case, the group (\resp monoid) presented by $\PRES{X}{\{xy = z, t \mid x, y, z, t \in X \mbox{ and }R(x, y) = (z, t)\}}$ is called the \emph{structure group} (\resp \emph{structure monoid}) of~$(X, R)$~\cite{Eti}.

Such structure groups happen to admit a number of alternative definitions and make an interesting family. Among others, every structure group is a \emph{Garside group}~\cite{Cho}, meaning that there exists a pair~$(M, \Delta)$ such that $M$ is a cancellative monoid in which left-divisibility---defined by $g \dive h \Leftrightarrow \exists h' {\in} M (h = gh')$---is a lattice, $\Delta$ is a \emph{Garside element} in~$M$---meaning that the left- and right-divisors coincide, are finite in number, and generate~$M$---and $G$ is a group of fractions for~$M$~\cite{Dgk}.

In the case of Artin's braid group~$B_n$, the seminal example of a Garside group, the Garside structure $(B_n^+, \Delta_n)$ is connected with the exact sequence $1 \to P_n \to B_n \to \mathfrak{S}_n \to 1$, where $P_n$ is the pure braid group: $B_n^+$ is the monoid of positive braids, the lattice made by the divisors of~$\Delta_n$ in~$B_n^+$ is isomorphic to the weak order on~$\mathfrak{S}_n$. A presentation of~$\mathfrak{S}_n$ is obtained by adding $n-1$~relations~$\sigma_i^2 = 1$ to the standard presentation of~$B_n$, and the germ derived from~$\mathfrak{S}_n$ and the transpositions~$\sigma_i$, meaning the substructure of~$\mathfrak{S}_n$ where multiplication is restricted to the cases when lengths add, generates~$B_n^+$~\cite{Dif} and its Cayley graph is the Hasse diagram of the divisors of~$\Delta_n$. The results extend to all types in the Cartan classification, connecting spherical Artin--Tits groups with the associated finite Coxeter group.

As Garside groups extend spherical type Artin--Tits groups in many respects, it is natural to ask:

\begin{question}
\label{Ques}
Assume that $G$ is a Garside group, with Garside structure $(M, \Delta)$. Does there exist a finite quotient~$W$ of~$G$ such that $W$ provides a Garside germ for~$M$ and the Cayley graph of~$W$ with respect to the atoms of~$M$ is isomorphic to the lattice of divisors of~$\Delta$ in~$M$ ?
\end{question}

In other words, does every Garside group admit some Coxeter-like group? The general question remains open. The aim of this note is to establish a positive answer for structure groups of set-theoretic solutions of YBE. We attach with every such solution a number called its \emph{class} and establish:

\begin{theorem}
\label{Main}
Assume that $G$ (\resp $M$) is the structure group (\resp monoid) of an involutive, nondegenerate solution~$(X, R)$ of YBE with $X$ of size~$n$ and class~$p$. Then there exist a Garside element~$\Delta$ in~$M$ and a finite group~$W$ of order~$p^n$ entering a short exact sequence $1 \to \mathbb{Z}^n \to G \to W \to 1$ such that $(W, X)$ provides a germ for~$M$ whose Cayley graph is the Hasse diagram of the divisors of~$\Delta$ in~$M$. A presentation of~$W$ is obtained by adding $n$~relations $w_x = 1$ to that of~$G$, with $w_x$ an explicit length~$p$ word beginning with~$x$.
\end{theorem}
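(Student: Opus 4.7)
The plan is to use the known fact that the structure group~$G$ of an involutive, nondegenerate YBE solution contains a normal free abelian subgroup of finite index (Gateva-Ivanova--Van den Bergh, Etingof--Schedler--Soloviev) and to make that subgroup explicit from the combinatorics of~$(X, R)$. First, I would construct the words~$w_x$ and the class~$p$ by iteration. For each $x \in X$, use the maps~$R_1, R_2$ and nondegeneracy to generate a canonical sequence $x = x_1, x_2, \pdots$ in~$X$ and set $w_x := x_1 x_2 \pdots x_p$, with $p$ the least integer such that, for every~$x$, this word abelianises to a common pattern in~$\mathbb{Z}^X$ (the natural candidate being a sum over an orbit of~$X$ under the permutation action $y \mapsto R_1(x, y)$). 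Finiteness of the permutation group generated by the~$R_1(x, -)$'s, which is classical for involutive nondegenerate solutions, guarantees that such a uniform~$p$ exists.

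Second, I would show that $N := \langle w_x \mid x \in X \rangle \subseteq G$ is a normal free abelian subgroup of rank~$n$, whence $W := G/N$ is finite. Freeness and rank follow from the chosen abelianisation pattern, while normality reduces to checking that conjugation by each generator $y \in X$ permutes $\{w_x \mid x \in X\}$ modulo commutators; this is a direct computation using the defining relations $xy = zt$ iterated along the sequences defining the~$w_x$. To pin down $\vert W \vert = p^n$, I would exhibit a \emph{normal form} in the monoid~$M$: every element factors uniquely as a \emph{short part}, bounded in length coordinate-by-coordinate, times a positive combination of the~$w_x$; the set of short parts then forms a transversal of cardinality exactly~$p^n$.

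Third, define~$\Delta$ as a product of the~$w_x$ in a fixed order (equivalently, the least common right-multiple in~$M$ of suitable short parts) and verify the Garside axioms for $(M, \Delta)$. The bijection between~$\Div(\Delta)$ and the short parts makes $(\Div(\Delta), \dive)$ a lattice of size~$p^n$, with $X$ the set of atoms; coincidence of left- and right-divisors follows from the normality of~$N$ in~$G$. The germ structure on~$(W, X)$---multiply two cosets precisely when the lengths of their short representatives add---then coincides by construction with the partial product of~$M$ restricted to~$\Div(\Delta)$, so the Cayley graph of~$W$ with respect to~$X$ is the Hasse diagram of~$(\Div(\Delta), \dive)$. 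Because $N$ is, as a consequence of the permutation action used for normality, generated as a normal subgroup of~$G$ by the~$w_x$, appending the $n$ relations $w_x = 1$ to the presentation of~$G$ yields the claimed presentation of~$W$.

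The main obstacle is the combinatorial calibration at the heart of the second step: proving simultaneously that one and the same~$p$ works for every~$x$, that $N$ has rank exactly~$n$ rather than smaller, and that the index of~$N$ in~$G$ is exactly~$p^n$ rather than a proper multiple. Equivalently, one must show that the $n$~words~$w_x$ are ``just enough'' relations to cut $G$ down to a group of the right size. Once the short-part normal form is established, the Garside property of~$\Delta$, the germ property of~$(W, X)$, and the identification of the Cayley graph with the Hasse diagram of~$\Div(\Delta)$ all follow by routine unwinding of definitions.
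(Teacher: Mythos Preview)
Your overall architecture is right, and you have correctly located the crux in the ``calibration'' step. But two concrete points in the plan would fail as written, and the paper's device for handling them is precisely the thing you cite but do not use.

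First, your proposed $\Delta$ is off. If each $w_x$ has length~$p$, then the product of all the~$w_x$ pulls back (under the bijection you are implicitly assuming) to the element of~$\mathbb{N}^n$ with all exponents equal to~$p$, hence has $(p+1)^n$ left-divisors, not~$p^n$; its divisor lattice cannot match the Cayley graph of a group of order~$p^n$. The correct $\Delta$ has length~$n(p-1)$, with divisors indexed by~$\{0 \wdots p-1\}^n$.

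Second, and more seriously, the argument ``freeness and rank follow from the chosen abelianisation pattern'' does not work. The abelianisation of~$G$ can collapse drastically: in the class-$3$ example $\PRES{\tta,\ttb,\ttc}{\tta\ttc=\ttb^2,\ \ttb\tta=\ttc^2,\ \ttc\ttb=\tta^2}$ the abelianisation forces $\tta=\ttb=\ttc$, so the images of the $w_x$ are not independent there, and abelianisation detects neither the rank of~$N$ nor that the index is exactly~$p^n$.

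The paper sidesteps both issues by routing everything through the $I$-structure bijection $\nu:\mathbb{N}^n\to M$ of Gateva-Ivanova--Van den Bergh. Your ``short-part normal form'' \emph{is}~$\nu$; once you use it explicitly, the calibration is free, because the congruence on~$M$ is simply the $\nu$-transport of ``all exponents agree mod~$p$'' on~$\mathbb{N}^n$. The only thing to check is that this transport is compatible with the product of~$M$, which reduces to $\nu(x^p u)=\nu(x^p)\,\nu(u)$; unwinding via the RC-operation~$\opR$, this identity is exactly the definition of class~$p$. One then gets $w_x=\nu(x^p)=x(x\opR x)((x\opR x)\opR(x\opR x))\cdots$, the commutation and rank of the~$w_x$ are inherited from the obvious fact in~$\mathbb{N}^n$, $\Delta=\nu(\prod_{x}x^{p-1})$, and $\vert W\vert=p^n$ is the number of $\equiv_p$-classes in~$\mathbb{N}^n$. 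So rather than attempting the normal form and the rank/index count by hand, you should make~$\nu$ do the work.
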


Theorem~\ref{Main} extends the results of~\cite{ChG}, in which solutions of class~$2$ are addressed by a different method. Our approach relies on the connection with the \emph{right-cyclic law} of~\cite{Rum} and on the existence of an \emph{$I$-structure}~\cite{GaV}~\cite{JeO} which enables one to carry to arbitrary structure monoids results that are trivial in the case of~$\mathbb{Z}^n$.

%%%%
\section{The class of a finite RC-quasigroup}

The first step consists in switching from solutions of YBE to RC-quasigroups. 

\begin{e-definition}
\label{RC}
\rm An \emph{RC-system} is a pair~$(X, \opR)$ with $\opR$ a binary operation on~$X$ that obeys the \emph{RC-law} $(x \opR y) \opR (x \opR z) = (y \opR x) \opR (y \opR z)$. An \emph{RC-quasigroup} is an RC-system in which the maps $y \mapsto x \opR y$ are bijections. An RC-quasigroup is \emph{bijective} if the map $(x, y) \mapsto (x \opR y, y \opR x)$ from~$X^2$ to~$X^2$ is bijective. The \emph{associated group} (\resp \emph{monoid}) is presented by $\PRES{X}{\{x(x \opR y) = y(y \opR x) \mid x, y \in\nobreak X\}}$.
\end{e-definition}

As proved in~\cite{Rum}, if $(X, R)$ is an involutive, nondegenerate set-theoretic solution of YBE, then defining~$x \opR y$ to be the (unique)~$z$ satisfying $R_1(x, z) = y$ makes~$X$ into a bijective RC-quasigroup and the group and monoid associated with~$(X, \opR)$ coincide with those of~$(X, R)$. Conversely, every bijective RC-quasigroup~$(X, \opR)$ comes associated with a set-theoretic solution of~YBE. Thus investigating structure groups of set-theoretic solutions of YBE and groups of bijective RC-quasigroups are equivalent tasks. 

\begin{e-definition}
\label{Class}
\rm Inductively define $\Pi_1(x_1) = x_1$ and 
\begin{equation}
\Pi_n(x_1 \wdots x_n) = \Pi_{n-1}(x_1 \wdots x_{n-1}) \opR \Pi_{n-1}(x_1 \wdots \break x_{n-2}, x_n).
\end{equation}
An RC-quasigroup $(X, \opR)$ is said to be of \emph{class~$p$} if $\Pi_{p+1}(x \wdots x, y) = y$ holds for all~$x, y$ in~$X$.
\end{e-definition}

\begin{lemma}
\label{Finite}
Every finite RC-quasigroup is of class~$p$ for some~$p$.
\end{lemma}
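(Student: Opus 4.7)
Proof plan: I would reduce $\Pi_{p+1}(x,\dots,x,y)=y$ to an identity of permutations of~$X$ and then exploit finiteness twice. A direct induction on~$n$ using the recursion defining~$\Pi_n$ shows
\[
\Pi_{n+1}(x,\dots,x,y) \;=\; \sigma_{c_n(x)}\comp\sigma_{c_{n-1}(x)}\comp\cdots\comp\sigma_{c_1(x)}(y),
\]
where $\sigma_a\colon b\mapsto a\opR b$ is the left-translation bijection guaranteed by the quasigroup hypothesis and $c_k(x):=\Pi_k(x,\dots,x)$ with all arguments equal to~$x$. Specializing the recursion for $\Pi_n$ to constant arguments gives $c_{k+1}(x)=c_k(x)\opR c_k(x)=g\bigl(c_k(x)\bigr)$ with $g(a):=a\opR a$, so $\bigl(c_k(x)\bigr)_{k\ge1}$ is the forward $g$-orbit of~$x$. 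The lemma thus reduces to exhibiting $p\ge1$ with $\phi_p^x:=\sigma_{c_p(x)}\comp\cdots\comp\sigma_{c_1(x)}=\mathrm{id}_X$ for every $x\in X$.

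Next I would establish that for every~$x$ the sequence $\bigl(c_k(x)\bigr)_{k\ge1}$ is \emph{purely} periodic, equivalently that $g$ is a bijection of the finite set~$X$. Finiteness already gives eventual periodicity, but without injectivity of~$g$ the orbit may enter a cycle that does not pass through~$x$, and there is then no mechanism forcing $\phi_n^x$ ever to return to $\mathrm{id}_X$. In the bijective case of Definition~\ref{RC} the conclusion is automatic, since the diagonal of the bijection $(x,y)\mapsto(x\opR y,y\opR x)$ is $x\mapsto\bigl(g(x),g(x)\bigr)$; in general I would deduce injectivity of~$g$ from the RC-law combined with the bijectivity of each~$\sigma_x$. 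I expect this to be the single delicate point of the proof.

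Granted pure periodicity with period $T(x)$, grouping the composition $\phi_{kT(x)}^x$ into blocks of length~$T(x)$ and using that $\sigma_{c_{j+T(x)}(x)}=\sigma_{c_j(x)}$ yields $\phi_{kT(x)}^x=\bigl(\phi_{T(x)}^x\bigr)^k$. Since $\mathrm{Sym}(X)$ is finite, $\phi_{T(x)}^x$ has finite order $N(x)$, whence $\phi_{N(x)T(x)}^x=\mathrm{id}_X$. Setting $p:=\mathrm{lcm}_{x\in X}\bigl(N(x)T(x)\bigr)$ then gives a uniform exponent: for each~$x$, $p$ is a multiple of~$T(x)$, so the sequence $\sigma_{c_k(x)}$ has period dividing~$p$, and $p/T(x)$ is a multiple of~$N(x)$, so $\phi_p^x=\bigl(\phi_{T(x)}^x\bigr)^{p/T(x)}=\mathrm{id}_X$, as required.
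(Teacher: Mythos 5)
Your reduction is correct and, once unrolled, is essentially the paper's own argument: the paper introduces the auxiliary map $\Phi(x,y)=(x\opR x,\,x\opR y)$ on~$X^2$, which in your notation is exactly the skew product $(g(x),\sigma_x(y))$, so that $\Phi^r(x,y)=(c_{r+1}(x),\phi^x_r(y))$. The paper then simply observes that $\Phi$ is injective, hence a bijection of the finite set~$X^2$, hence of finite order; this produces a single exponent~$p$ with $\Pi_{p+1}(x,\dots,x,y)=y$ for all $x,y$ in one stroke and subsumes your pure-periodicity, finite-order-in-$\mathrm{Sym}(X)$ and lcm bookkeeping (all of which is correct as far as it goes).

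The genuine gap is exactly the point you flag and postpone: the injectivity of $g\colon x\mapsto x\opR x$, equivalently the pure periodicity of the orbits $c_k(x)$. Your fallback plan --- to ``deduce injectivity of~$g$ from the RC-law combined with the bijectivity of each~$\sigma_x$'' --- cannot succeed as stated: that implication is not a formal consequence of those two hypotheses, and finiteness must enter essentially. What is true, and what the paper invokes, is Rump's theorem (\cite{Rum}, see also \cite{JeO}) that every \emph{finite} RC-quasigroup is automatically bijective in the sense of Definition~\ref{RC}; injectivity of~$g$ then follows by restricting $\Psi\colon(x,y)\mapsto(x\opR y,y\opR x)$ to the diagonal, exactly as you note for the bijective case. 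Since the lemma is asserted for all finite RC-quasigroups, not just bijective ones, you must either cite this theorem or reprove it --- it is a substantive result, not a routine manipulation of the RC-law. With that input your proof closes; without it, the sequence $c_k(x)$ could enter a cycle not containing~$x$ and, as you yourself observe, nothing would force $\phi^x_n$ ever to equal $\mathrm{id}_X$.
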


\begin{proof}
Let $(X, \opR)$ be a finite RC-quasigroup. First, $(X, \opR)$ must be bijective, that is, the map $\Psi : (x, y) \mapsto (x \opR y, y \opR x)$ is bijective on~$X^2$~\cite{Rum} (or \cite{JeO} for a different argument). Now, consider the map $\Phi : (x, y) \mapsto (x \opR x, x \opR y)$ on~$X^2$. Assume $(x, y) \not= (x', y')$. For $x \not= x'$, $\Psi(x, x) \not= \Psi(x', x')$ implies $x \opR x \not= x' \opR x'$; for $x =\nobreak x'$, we have $y \not= y'$, whence $x \opR y \not= x \opR y'$ since left-translations are injective; so $\Phi(x, y) \not= \Phi(x', y')$ always holds. So $\Phi$ is injective, hence bijective on~$X^2$, and $\Phi^{p+1} = \mathrm{id}$ holds for some~$p \ge 1$. An induction gives $\Phi^r(x, y) = (\Pi_r(x \wdots x, x), \Pi_r(x \wdots x, y))$. So $\Phi^{p+1} = \mathrm{id}$ implies $\Pi_{p+1}(x \wdots x, y) = y$ for all~$x, y$, that is, $(X, \opR)$ is of class~$p$.  
\end{proof}

%%%%
\section{Using the $I$-structure}

From now on, assume that $M$ (\resp $G$) is the structure group of some finite RC-quasigroup~$(X, \opR)$ of size~$n$ and class~$p$. The form of the defining relations of~$M$ implies that the Cayley graph of~$M$ with respect to~$X$ is an $n$-dimensional lattice. It was proved in~\cite{GaV} that $M$ admits a \emph{(right) $I$-structure}, defined to be a bijection $\nu: \mathbb{N}^n \to M$ satisfying $\nu(1) = 1$ and $\{\nu(u x) \mid x \in X\} = \{\nu(u)x \mid x \in X\}$ for every~$u$ in~$\mathbb{N}^n$, that is, equivalently, $\nu(u x) = \nu(u)\pi(u)(x)$ for some permutation~$\pi(u)$ of~$X$. The monoid~$M$ is then called \emph{of right-$I$-type}. Our point is that the $I$-structure (which is unique) is connected with~$\opR$. Without loss of generality, we shall assume that $X$ is the standard basis of~$\mathbb{N}^n$ and that $\nu(x) = x$ holds for~$x$ in~$X$.

\begin{lemma}
\label{Nu}
For all $x_1 \wdots x_r$ in~$X$, we have $\nu(x_1 \pdots x_r) = \Sigma_r(x_1 \wdots x_r)$, with $\Sigma_r$ inductively defined by 
$\Sigma_1(x_1) = x_1$ and $$\Sigma_r(x_1 \wdots x_r) = \Sigma_{r-1}(x_1, \wdots x_{r-1}) \cdot \Pi_r(x_1 \wdots x_r).$$
\end{lemma}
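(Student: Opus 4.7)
The plan is to proceed by induction on $r$. The inductive hypothesis gives $\nu(x_1 \pdots x_{r-1}) = \Sigma_{r-1}(x_1 \wdots x_{r-1})$, and the defining property of the $I$-structure gives
\[
\nu(x_1 \pdots x_r) = \nu(x_1 \pdots x_{r-1}) \cdot \pi(x_1 \pdots x_{r-1})(x_r).
\]
So the lemma reduces to the identification $\pi(x_1 \pdots x_{r-1})(x_r) = \Pi_r(x_1 \wdots x_r)$, and that is where the work lies.

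The key step is to establish the recursion
\[
\pi(u \cdot y)(x) = \pi(u)(y) \opR \pi(u)(x)
\]
for all $u \in \mathbb{N}^n$ and $x, y \in X$. The derivation exploits the commutativity of $\mathbb{N}^n$: computing $\nu(u \cdot yx)$ and $\nu(u \cdot xy)$ by unfolding the $I$-structure twice yields
\[
\nu(u) \cdot \pi(u)(y) \cdot \pi(uy)(x) = \nu(u) \cdot \pi(u)(x) \cdot \pi(ux)(y)
\]
in~$M$. Left-cancelling $\nu(u)$ (permissible since structure monoids are cancellative) leaves a length-two identity in~$M$. For $x \neq y$, the letters $\pi(u)(y)$ and $\pi(u)(x)$ are distinct since $\pi(u)$ is a permutation, and the unique defining relation of~$M$ between length-two words with these two starting letters, namely $\pi(u)(y) \cdot (\pi(u)(y) \opR \pi(u)(x)) = \pi(u)(x) \cdot (\pi(u)(x) \opR \pi(u)(y))$, forces the claimed identity. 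For $x = y$ the equation in $M$ becomes trivial and offers no direct information; however, $\pi(uy)$ is a permutation of~$X$ and the candidate map $z \mapsto \pi(u)(y) \opR \pi(u)(z)$ is also a bijection of~$X$, being the composite of $\pi(u)$ with the bijection $a \mapsto \pi(u)(y) \opR a$ supplied by the RC-quasigroup axioms. The two bijections agree on $X \setminus \{y\}$ by the case already handled, hence on all of~$X$.

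Once the recursion is in hand, the target identity $\pi(x_1 \pdots x_{r-1})(x_r) = \Pi_r(x_1 \wdots x_r)$ follows by a second induction on~$r$. The base $r=1$ is $\pi(1)(x_1) = x_1 = \Pi_1(x_1)$, immediate from $\nu(1) = 1$. The step applies the recursion with $u = x_1 \pdots x_{r-2}$ and $y = x_{r-1}$, and invoking the inductive hypothesis on each of the two resulting $\pi$-values converts the right-hand side into $\Pi_{r-1}(x_1 \wdots x_{r-1}) \opR \Pi_{r-1}(x_1 \wdots x_{r-2}, x_r)$, which is $\Pi_r(x_1 \wdots x_r)$ by definition. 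The main obstacle in the whole argument is the $x = y$ case of the recursion, where the defining relations of~$M$ contribute nothing and one must instead combine the bijectivity of $\pi(uy)$ supplied by the $I$-structure with the RC-quasigroup property to pin down the missing value.
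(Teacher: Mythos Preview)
Your proof is correct and shares its core with the paper's: both hinge on identifying $\pi$-values with $\opR$-values via the length-two relations of~$M$ (cancellativity for distinct letters, a bijectivity count for the diagonal case). The organisation differs. The paper peels off the \emph{first} variable, invoking the cited formula $\nu(uv) = \nu(u)\,\nu(\pi(u)[v])$ from~\cite{GaV} with $u = x_1$, so that only the base identity $\pi(x_1)(y) = x_1 \opR y$ needs to be established; the induction then reduces to the purely formal unfolding $x_1\,\Sigma_{r-1}(x_1 \opR x_2 \wdots x_1 \opR x_r) = \Sigma_r(x_1 \wdots x_r)$. You instead peel off the \emph{last} variable and prove the more general recursion $\pi(uy)(x) = \pi(u)(y) \opR \pi(u)(x)$ directly from the commutativity of~$\mathbb{N}^n$, which makes your argument self-contained (no appeal to the product formula~(\ref{Product})) at the price of carrying an arbitrary $u$ through the length-two analysis. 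Neither route is materially harder; yours yields the explicit description of~$\pi(u)$ along the way, while the paper's keeps the induction step shorter.
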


\begin{proof}
The result can be established directly by developing a convenient RC-calculus and proving that $\Sigma_r(x_1 \wdots x_r)$ satisfies all properties required for an $I$-structure. A shorter proof is to start from the existence of the $I$-structure~$\nu$ and just connect it with the values of~$\Sigma_r$. As established in~\cite{GaV} (see also \cite[Chapter~8, Lemma~8.2.2]{JeO}), the following inductive relations are satisfied for all~$u, v$ in~$\mathbb{N}^n$: 
\begin{equation}
\label{Product}
\nu(uv) = \nu(u)\, \nu(\pi(u)[v]) \mbox{\quad and\quad } \pi(uv) = \pi(\pi(u)[v]) \comp \pi(u)
\end{equation}
where $\pi[u]$ is the result of applying~$\pi$ to~$u$ componentwise.

We then use induction on~$r$. For $r = 1$, the result is obvious. Assume $r = 2$ and $x_1 \not= x_2$. By definition, we have $\nu(x_1 x_2) = x_1 \pi(x_1)(x_2) = \nu(x_2 x_1) = x_2 \pi(x_2)(x_1)$. This shows that $\nu(x_1 x_2)$ must be the right-lcm (least common right-multiple) of~$x_1$ and~$x_2$ in~$M$. On the other hand, $x_1 (x_1 \opR x_2) = x_2 (x_2 \opR x_1)$ holds in~$M$ by definition, and this must also represent the right-lcm of~$x_1$ and~$x_2$. By uniqueness of the right-lcm and left-cancellativity, we deduce $\pi(x_1)(x_2) = x_1 \opR x_2$. Next, for $x_1 = x_2$, the value of~$\pi(x_1)(x_2)$, as well as that of~$x_1 \opR x_2$, must be the unique element of~$X$ that is not of the form $\pi(x_1)(x)$ or $x_1 \opR x$ with $x \not= x_1$, respectively. This forces $\pi(x_1)(x_2) = x_1 \opR x_2$ in this case as well, implying $\nu(x_1 x_2) = x_1(x_1 \opR x_2) = \Sigma_2(x_1, x_2)$ in every case. Assume now $r \ge 3$. We find
\begin{multline*}
\nu(x_1 \pdots x_r) 
= x_1 \, \nu(\pi(x_1)[x_2 \pdots x_r])
= x_1 \, \nu((x_1 \opR x_2) \pdots (x_1 \opR x_r))\\
= x_1 \, \Sigma_{r-1}(x_1 \opR x_2 \wdots x_1 \opR x_r)
= \Sigma_r(x_1, x_2 \wdots x_r),
\end{multline*}
the first equality by~(\ref{Product}), the second by the case~$r = 2$, the third by the induction hypothesis, and the last one by expanding the terms.
\end{proof}

\begin{lemma}
\label{Frozen}
For~$x \in X$ and $r \ge 0$, let $x^{[r]} = \nu(x^r)$. For all $x \in X$ and $u \in \mathbb{N}^n$, we have $\nu(x^p u) = x^{[p]} \nu(u)$ in~$M$. In particular, we have $\pi(x^p) = \mathrm{id}$ and, for all~$x, y$ in~$X$, the elements $x^{[p]}$ and $y^{[p]}$ commute in~$M$.
\end{lemma}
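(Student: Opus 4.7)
The plan is to reduce everything to identifying the permutation $\pi(x^p)$ with the identity map, from which the product formula $\nu(x^p u) = x^{[p]}\nu(u)$ and the commutativity $x^{[p]} y^{[p]} = y^{[p]} x^{[p]}$ will both follow cleanly from the $I$-structure relations~(\ref{Product}).

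First I would compute $\pi(x^r)(y)$ for arbitrary $r$ and $y$. By the definition of $\pi$, one has $\nu(x^r y) = \nu(x^r)\,\pi(x^r)(y) = x^{[r]}\,\pi(x^r)(y)$. On the other hand, Lemma~\ref{Nu} gives
\[
\nu(x^r y) = \Sigma_{r+1}(x\wdots x, y) = \Sigma_r(x\wdots x)\cdot \Pi_{r+1}(x\wdots x, y) = x^{[r]}\cdot \Pi_{r+1}(x\wdots x, y).
\]
Since $M$ is left-cancellative (it is a Garside monoid), comparing the two expressions yields $\pi(x^r)(y) = \Pi_{r+1}(x\wdots x, y)$.

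Taking $r = p$ and using the class~$p$ assumption $\Pi_{p+1}(x\wdots x, y) = y$ from Definition~\ref{Class}, I get $\pi(x^p)(y) = y$ for every $y \in X$, i.e.\ $\pi(x^p) = \mathrm{id}$. The equality $\nu(x^p u) = x^{[p]} \nu(u)$ is now immediate from the first relation of~(\ref{Product}): indeed, $\nu(x^p u) = \nu(x^p)\,\nu(\pi(x^p)[u]) = x^{[p]}\,\nu(u)$.

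Finally, for commutativity, apply this formula with $u = y^p$ to obtain $\nu(x^p y^p) = x^{[p]} y^{[p]}$, and symmetrically $\nu(y^p x^p) = y^{[p]} x^{[p]}$. Since $x^p y^p$ and $y^p x^p$ coincide in the commutative monoid $\mathbb{N}^n$, their images under $\nu$ are equal, so $x^{[p]} y^{[p]} = y^{[p]} x^{[p]}$. The only step requiring any care is the identification $\pi(x^r)(y) = \Pi_{r+1}(x\wdots x, y)$, which is really just Lemma~\ref{Nu} combined with left-cancellativity; once that is in place, the rest is formal manipulation of the $I$-structure identities.
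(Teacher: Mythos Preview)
Your proof is correct and uses the same ingredients as the paper's---Lemma~\ref{Nu}, left-cancellativity, the class-$p$ hypothesis, and the $I$-structure relation~(\ref{Product})---but in the reverse logical order. The paper first establishes the general identity $\nu(x^p u) = x^{[p]}\nu(u)$ by expanding $\Sigma_{p+q}(x\wdots x, y_1\wdots y_q)$ as $\Sigma_p(x\wdots x)\,\Sigma_q(\Pi_{p+1}(x\wdots x, y_1)\wdots \Pi_{p+1}(x\wdots x, y_q))$ and then invoking the class-$p$ condition, and only afterwards specializes to $u=y$ to read off $\pi(x^p)=\mathrm{id}$. You instead isolate the single-letter case $\pi(x^r)(y)=\Pi_{r+1}(x\wdots x,y)$ first, deduce $\pi(x^p)=\mathrm{id}$ directly, and then get the general formula for free from~(\ref{Product}). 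Your route is slightly cleaner in that it bypasses the combinatorial expansion of $\Sigma_{p+q}$ (which the paper leaves as ``expanding the terms'' and which really rests on an inductive identity for~$\Pi$); the paper's route, on the other hand, makes the factorization $\nu(x^p u)=x^{[p]}\nu(u)$ visible as a statement purely about the RC-calculus before any appeal to~(\ref{Product}). The commutativity argument is identical in both.
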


\begin{proof}
Let $y_1 \pdots y_q$ be a decomposition of~$u$ in terms of elements of~$X$. By Lemma~\ref{Nu}, we have 
\begin{multline*}
\nu(x^pu) = \Sigma_{p+q}(x \wdots x, y_1 \wdots y_q)\\
 = \Sigma_p(x \wdots x) \Sigma_q(\Pi_{p+1}(x \wdots x, y_1) \wdots \Pi_{p+1}(x, \wdots x, y_q))\\
 = \Sigma_p(x \wdots x) \Sigma_q(y_1 \wdots y_q) = \nu(x^p) \nu(y_1 \pdots y_q) = x^{[p]} \nu(u),
 \end{multline*}
in which the second equality comes from expanding the terms and the third one from the assumption that $M$ is of class~$p$. Applying with $u = y$ in~$X$ and merging with $\nu(x^p y) = \nu(x^p) \, \pi(x^p)(y)$, we deduce $\pi(x^p) = \mathrm{id}$. On the other hand, applying with $u = y^{[p]}$, we find $x^{[p]}y^{[p]} = \nu(x^py^p) = \nu(y^px^p) = y^{[p]}x^{[p]}$.
\end{proof}

\begin{lemma}
\label{Delta}
Assume $p \ge 2$ and define $\Delta = \nu(\prod_{x \in X}x^{p-1})$. Then $\Delta$ is a Garside element in~$M$, and its family of divisors is $\nu(\{0 \wdots p-1\}^n)$, which has $p^n$~elements. Moreover $\Delta^p$ is central in~$M$.
\end{lemma}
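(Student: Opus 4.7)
The plan is to lift the divisibility analysis in~$M$ through the bijection $\nu : \mathbb{N}^n \to M$ to the componentwise order on~$\mathbb{N}^n$, using Lemma~\ref{Frozen} to neutralize the permutations~$\pi(u)$ whenever~$u$ has all coordinates divisible by~$p$. First I would identify the left-divisors of~$\Delta$. From the $I$-structure identity $\nu(ux) = \nu(u)\pi(u)(x)$, $\nu(u)$ left-divides $\nu(ux)$ for every~$u$ and every $x \in X$, hence by induction $\nu(u) \dive \nu(v)$ whenever $u$ divides~$v$ in~$\mathbb{N}^n$. Conversely, if $\nu(v) = \nu(u)\nu(w)$, the product formula~(\ref{Product}) rewritten as $\nu(u)\nu(w) = \nu(u \cdot \pi(u)^{-1}[w])$, together with injectivity of~$\nu$, forces $v = u \cdot \pi(u)^{-1}[w]$, so $u$ divides~$v$. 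Applied to $v = \prod_{x} x^{p-1}$, this shows that the left-divisors of~$\Delta$ are exactly $\nu(\{0, \ldots, p-1\}^n)$, a set of size~$p^n$.

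Next I would show that the right-divisors of~$\Delta$ form the same set. Left-cancellativity in~$M$ makes the map $g \mapsto h$ defined by $\Delta = g h$ a bijection from left-divisors to right-divisors, so by cardinality it suffices to show its image lies in $\nu(\{0, \ldots, p-1\}^n)$. Writing $g = \nu(u)$ and $h = \nu(v)$, the identity $\nu(u)\nu(v) = \Delta$ combined with the product formula forces $u \cdot \pi(u)^{-1}[v] = \prod_{x} x^{p-1}$, whence every component of $\pi(u)^{-1}[v]$ lies in $\{0, \ldots, p-1\}$; since $\pi(u)$ acts by permuting coordinates, the components of~$v$ lie there too. Hence the divisor set of~$\Delta$ is $\nu(\{0, \ldots, p-1\}^n)$ on both sides, of cardinality~$p^n$; as $X \subseteq \nu(\{0, \ldots, p-1\}^n)$ for $p \ge 2$ and $X$ generates~$M$, $\Delta$ is a Garside element.

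Finally, for the centrality of~$\Delta^p$, I would establish by induction on~$k$ that $\Delta^k = \nu(\prod_{x} x^{k(p-1)})$: since the vector $\prod_{x} x^{k(p-1)}$ has all coordinates equal, it is fixed by every coordinate permutation, so (\ref{Product}) collapses to multiplication in~$\mathbb{N}^n$ at each step. Taking $k = p$, iterating Lemma~\ref{Frozen} yields $\pi(\prod_{x} x^{p(p-1)}) = \mathrm{id}$, and combining with the same permutation-invariance of all-equal vectors, both $\Delta^p \cdot \nu(w)$ and $\nu(w) \cdot \Delta^p$ reduce to $\nu(\prod_{x} x^{p(p-1)} \cdot w)$, proving~$\Delta^p$ central. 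The delicate step is the right-divisor argument, where one must carefully track how $\pi(u)$ permutes the coordinates of~$v$; everything else reduces to elementary arithmetic in~$\mathbb{N}^n$ once the permutation~$\pi$ is shown to act trivially on the relevant vectors.
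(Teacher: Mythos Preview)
Your proposal is correct and follows essentially the same route as the paper: both arguments transport the divisibility order through~$\nu$ to identify left-divisors with $\nu(\{0,\ldots,p-1\}^n)$, use the permutation action of~$\pi(u)$ together with the complement map $g\mapsto h$ to show right-divisors coincide with left-divisors, and establish centrality of~$\Delta^p$ via the permutation-invariance of the constant vector $\delta^{p(p-1)}$ combined with $\pi(x^p)=\mathrm{id}$ from Lemma~\ref{Frozen}. The only minor point is that the bijectivity of $g\mapsto h$ requires right-cancellativity (not just left), but $M$ is fully cancellative, so this is harmless.
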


\begin{proof}
The map~$\nu$ is compatible with~$\dive$ : for all~$u, v$ in~$\mathbb{N}^n$, we have $u \dive v$ in~$\mathbb{N}^n$ if and only if $\nu(u) \dive \nu(v)$ holds in~$M$. Indeed, by~(\ref{Product}), $v = u x$ with~$x$ in~$X$ implies $\nu(v) = \nu(u) \pi(u)(x)$, whence $\nu(u) \dive \nu(v)$ in~$M$. Conversely, for $\nu(v) = \nu(u)x$ with~$x$ in~$X$, as $\pi(u)$ is bijective, we have $\pi(u)(y) = x$ for some~$y$ in~$X$, whence $\nu(u y) = \nu(u)\pi(u)(y) = \nu(u)x = \nu(v)$, and $v = u y$ since $\nu$ is injective, that is, $u \dive v$ in~$\mathbb{N}^n$. Hence the left-divisors of~$\Delta$ in~$M$ are the image under~$\nu$ of the $p^n$~divisors of~$\delta^{p-1}$ in~$\mathbb{N}^n$, with $\delta = \prod_{x \in X}x$. For right-divisors, the maps~$\pi(u)$ are bijective, so every right-divisor of~$\Delta$ must be a left-divisor of~$\Delta$. Then the duality map $g \mapsto h$ for $gh = \Delta$ is a bijection from the left- to the right-divisors of~$\Delta$. So the left- and right-divisors of~$\Delta$ coincide, and they are $p^n$ in number. Since every element of~$X$ divides~$\Delta$, the latter is a Garside element in~$M$. Finally, by Lemma~\ref{Nu}, $\Delta^p$ is the product of the elements~$x^{[p]}$ repeated $p-1$~times; as $\sigma[\delta] = \delta$ holds for every permutation~$\sigma$, we deduce $x\Delta^p = \Delta^px$ for every~$x$. 
\end{proof}

For~$u \in \mathbb{N}^n$ and $x \in X$, write $\Exp{u}{x}$ for the (well-defined) number of~$x$ in an $X$-decomposition of~$u$.

\begin{lemma}
\label{Congruence}
For~$u, u'$ in~$\mathbb{N}^n$, say that $u \equiv_p u'$ holds if, for every~$x$ in~$X$, we have $\Exp{u}{x} = \Exp{u'}{x} \bmod{p}$, and, for $g, g'$ in~$M$, say that $g \equiv g'$ holds for $\nu^{-1}(g) \equiv_p \nu^{-1}(g')$. Then $\equiv$ is an equivalence relation on~$M$ that is compatible with left- and right-multiplication. 
\end{lemma}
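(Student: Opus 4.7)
The plan is to pull back the equivalence $\equiv_p$ along the bijection $\nu : \mathbb{N}^n \to M$. Since $\equiv_p$ is patently an equivalence relation on~$\mathbb{N}^n$ (componentwise congruence mod~$p$), so is~$\equiv$ on~$M$. The substantive content is compatibility with multiplication, which is nontrivial because $\nu$ is not a morphism: by~\eqref{Product}, right-multiplication of $g = \nu(u)$ by $x \in X$ yields $gx = \nu(uy)$ with $y = \pi(u)^{-1}(x)$, and left-multiplication yields $xg = \nu(x \cdot \pi(x)^{-1}[u])$, so the effect on exponents depends on the permutations $\pi(u)$ and~$\pi(x)$.

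The crux will be the implication $u \equiv_p u' \Rightarrow \pi(u) = \pi(u')$. Combining~\eqref{Product} with Lemma~\ref{Frozen}, for every $u \in \mathbb{N}^n$ and $x \in X$ I compute
$$\pi(ux^p) = \pi(\pi(u)[x^p]) \comp \pi(u) = \pi((\pi(u)(x))^p) \comp \pi(u) = \mathrm{id} \comp \pi(u) = \pi(u),$$
so appending $x^p$ to the argument of~$\pi$ has no effect; iterating, appending any product $\prod_x x^{pa_x}$ with $a_x \ge 0$ leaves $\pi$ unchanged. Taking $v$ to be the componentwise maximum of~$u$ and~$u'$, the differences $v - u$ and $v - u'$ both have all coordinates nonnegative and divisible by~$p$, so $v = u \prod_x x^{pa_x} = u' \prod_x x^{pa'_x}$ for suitable $a_x, a'_x \ge 0$, and $\pi(u) = \pi(v) = \pi(u')$ follows.

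Granted this, right-compatibility with a generator is immediate: setting $u = \nu^{-1}(g)$, $u' = \nu^{-1}(g')$, and $y := \pi(u)^{-1}(x) = \pi(u')^{-1}(x)$, one has $\nu^{-1}(gx) = uy \equiv_p u'y = \nu^{-1}(g'x)$. For left-compatibility, the $z$-exponent of $x \cdot \pi(x)^{-1}[u]$ equals $\Exp{x}{z} + \Exp{u}{\pi(x)(z)}$, which is congruent mod~$p$ to the corresponding expression for~$u'$ in every coordinate~$z$. An easy induction on the length of a generator-decomposition then extends both compatibilities from single generators to arbitrary elements of~$M$. The main obstacle is the implication $u \equiv_p u' \Rightarrow \pi(u) = \pi(u')$; once this is in hand, the rest is bookkeeping with exponent vectors.
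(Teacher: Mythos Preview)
Your argument is correct and follows essentially the same route as the paper's proof: both hinge on the implication $u \equiv_p u' \Rightarrow \pi(u) = \pi(u')$, derived from Lemma~\ref{Frozen} and~\eqref{Product}, and then dispatch right- and left-compatibility exactly as you do. The only cosmetic difference is that the paper compresses your componentwise-max argument into a ``without loss of generality $u' = ux^p$'' reduction, and phrases left-compatibility as ``$u \equiv_p u'$ implies $\sigma[u] \equiv_p \sigma[u']$ for every permutation~$\sigma$'' rather than unpacking the exponents; your version is just more explicit.
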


\begin{proof}
As $\nu$ is bijective, carrying the equivalence relation~$\equiv_p$ of~$\mathbb{N}^n$ to~$M$ yields an equivalence relation. Assume $\nu(u) \equiv \nu(u')$. Without loss of generality, we may assume $u' = u x^p = x^p u$ with $x$ in~$X$. Applying~(\ref{Product}) and Lemma~\ref{Frozen}, we deduce $\pi(u) = \pi(u')$ and, therefore, $\nu(u) \pi(u)(y) = \nu(u y) \equiv \nu(u' y) = \nu(u') \pi(u)(y)$. As $\pi(u)(y)$ takes every value in~$X$ when $y$ varies, $\equiv$ is compatible with right-multiplication by~$X$. On the other hand, $u \equiv_p u'$ implies $\sigma[u] \equiv_p \sigma[u']$ for every permutation~$\sigma$ in~$\mathfrak{S}_X$, so we obtain $y \nu(u) = \nu(y \pi(y)^{-1}[u]) \equiv \nu(y \pi(y)^{-1}[u']) = y \nu(u')$, and $\equiv$ is compatible with left-multiplication by~$X$. 
\end{proof}

\begin{lemma}
\label{Extension}
For $g = \Delta^{pe} h$, $g' = \Delta^{pe'} h'$  in~$G$ with $h, h' \in M$, say that $g \equiv g'$ holds if $h \equiv h'$ does. Then $\equiv$ is a congruence on~$G$ with $p^n$~classes, and the kernel of $G \to G{/}{\equiv}$ is the Abelian subgroup of~$G$ generated by the elements~$x^{[p]}$ with~$x \in X$.
\end{lemma}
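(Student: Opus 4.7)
The plan is to reduce everything to Lemma~\ref{Congruence} via the representation $g = \Delta^{pe}h$ with $e \in \mathbb{Z}$ and $h \in M$. Every $g \in G$ admits such a representation because $\Delta$ is a Garside element, so some $\Delta^k g$ lies in~$M$, and centrality of~$\Delta^p$ (Lemma~\ref{Delta}) lets us choose $k$ to be a multiple of~$p$. For well-definedness, suppose $\Delta^{pe}h = \Delta^{pe'}h'$ in~$G$ with $e \le e'$; as $M$ embeds into~$G$ we deduce $h = \Delta^{p(e'-e)}h'$ in~$M$. By Lemma~\ref{Delta}, $\Delta^p$ is a product of frozen elements~$x^{[p]}$, and Lemma~\ref{Frozen} shows that left multiplication by~$x^{[p]}$ corresponds under~$\nu^{-1}$ to adding~$p$ to the $x$-coordinate. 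Iterating, $\nu^{-1}(h)$ and $\nu^{-1}(h')$ differ by an element of~$p\mathbb{N}^n$, so $h \equiv h'$ in~$M$.

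Reflexivity, symmetry, and transitivity of~$\equiv$ on~$G$ are inherited from~$\equiv$ on~$M$. For compatibility with multiplication, write $g_i = \Delta^{pe_i}h_i$ and $g_i' = \Delta^{pe_i'}h_i'$ with $g_i \equiv g_i'$; centrality of~$\Delta^p$ yields $g_1 g_2 = \Delta^{p(e_1+e_2)} h_1 h_2$ and similarly for the primed pair, and Lemma~\ref{Congruence} gives $h_1 h_2 \equiv h_1' h_2'$ in~$M$, whence $g_1 g_2 \equiv g_1' g_2'$.

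To count classes, every $\equiv$-class in~$G$ contains an element of~$M$: starting from $g = \Delta^{pe}h$ the element $\Delta^{pk} g = \Delta^{p(k+e)}h$ lies in~$M$ for $k$ large, and is $\equiv$-equivalent to~$g$ by definition. Moreover two elements of~$M$ are $\equiv$-equivalent in~$G$ (take $e = e' = 0$) if and only if they are so in~$M$. Since the equivalence~$\equiv_p$ on~$\mathbb{N}^n$ has exactly $p^n$ classes, one per element of~$(\mathbb{Z}/p\mathbb{Z})^n$, the quotient~$G/{\equiv}$ also has $p^n$ elements.

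Finally, $g \equiv 1$ amounts to $h \equiv 1$ in~$M$ in any representation $g = \Delta^{pe}h$, i.e.\ $\nu^{-1}(h) \in p\mathbb{N}^n$. Iterating Lemma~\ref{Frozen} writes such an~$h$ as a product of the $x^{[p]}$, which pairwise commute by Lemma~\ref{Frozen}; Lemma~\ref{Delta} also expresses~$\Delta^p$ as such a product, hence $g$ lies in the abelian subgroup $A \subseteq G$ generated by $\{x^{[p]} : x \in X\}$. Conversely, any element of~$A$ can be written $\Delta^{-pk}h$ with $h$ a positive product of~$x^{[p]}$'s, so $\nu^{-1}(h) \in p\mathbb{N}^n$ and $g \equiv 1$. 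The main subtlety lies in the well-definedness step, where one must verify that the extra $\Delta^p$'s appearing in two representations translate, through~$\nu^{-1}$, into shifts by multiples of~$p$ in each coordinate, which is precisely what Lemmas~\ref{Frozen} and~\ref{Delta} provide.
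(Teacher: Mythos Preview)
Your argument is correct and follows essentially the same route as the paper: existence of a $\Delta^{pe}h$ representation from the Garside property, well-definedness via $h_1 = \Delta^{p(e-e_1)}h \Rightarrow h_1 \equiv h$, compatibility from Lemma~\ref{Congruence} plus centrality of~$\Delta^p$, class count via restriction to~$M$, and identification of the kernel with the subgroup generated by the~$x^{[p]}$. If anything you are more explicit than the paper in the well-definedness step, spelling out via Lemmas~\ref{Frozen} and~\ref{Delta} why multiplication by~$\Delta^p$ shifts~$\nu^{-1}$ by an element of~$p\mathbb{N}^n$; the paper simply asserts $h_1 \equiv h$ at that point. One cosmetic remark: the reason one may take the exponent~$k$ in $\Delta^k g \in M$ to be a multiple of~$p$ is simply that $\Delta \in M$ allows increasing~$k$, not centrality of~$\Delta^p$; centrality is what you actually use later, in the multiplicativity step.
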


\begin{proof}
As $\Delta$ is a Garside element in~$M$, every element of~$G$ admits a (non-unique) expression $\Delta^{pe} h$ with $e \in \mathbb{Z}$ and $h \in M$. Assume $g = \Delta^{pe} h = \Delta^{pe_1} h_1$ with $e > e_1$. As $M$ is left-cancellative, we find $h_1 = \Delta^{p(e - e_1)} h$, whence $h_1 \equiv h$. So, for every~$h'$ in~$M$, we have $h \equiv h' \Leftrightarrow h_1 \equiv h'$ and $\equiv$ is well-defined on~$G$. That $\equiv$ is compatible with multiplication on~$G$ follows from the compatibility on~$M$ and the fact that $\Delta^p$ lies in the centre of~$G$. Next, by definition, every element of~$G$ is $\equiv$-equivalent to some element of~$M$, so the number of $\equiv$-classes in~$G$ equals the number of $\equiv$-classes in~$M$, hence the number~$p^n$ of $\equiv_p$-classes in~$\mathbb{N}^n$. 

Finally, $u \equiv_p x^p u$ holds for all~$x$ in~$X$ and $u$ in~$\mathbb{N}^n$. This, together with Lemma~\ref{Nu}, implies $x^{[p]} \equiv 1$. Conversely, assume $g \equiv 1$ in~$M$. By definition, $\nu^{-1}(g)$ lies in the $\equiv_p$-class of~$1$, hence one can go from~$\nu^{-1}(g)$ to~$1$ by multiplying or dividing by elements~$x^p$ with~$x \in X$. By Lemma~\ref{Nu} again, this means that one can go from~$g$ to~$1$ by multiplying or dividing by elements~$x^{[p]}$ with~$x \in X$. In other words, the latter elements generate the kernel of the projection of~$G$ to~$G{/}{\equiv}$.
\end{proof}

Now Theorem~\ref{Main} readily follows. Indeed, define~$W$ to be the finite quotient-group~$G{/}{\equiv}$. We saw that the kernel of the projection of~$G$ onto~$W$ is the free Abelian group generated by the $n$~elements~$x^{[p]}$ with~$x \in X$, thus giving an exact sequence $1 \to \mathbb{Z}^n \to G \to W \to 1$. A presentation of~$W$ is obtained by adding to the presentation of~$G$ in Definition~\ref{RC} the $n$~relations $x^{[p]} = 1$, that is, $x(x\opR x)((x \opR x)\opR(x \opR x)) ... = 1$. By construction, the Hasse diagram of the lattice made of the $p^n$~divisors of~$\Delta$ is the image under~$\nu$ of the sublattice of~$\mathbb{N}^n$ made of the $p^n$ divisors of~$\delta$ in~$\mathbb{N}^n$, whereas the Cayley graph of the germ derived from~$(W, X)$---that is, $W$ equipped with the partial product obtained by restricting to the cases when the $X$-lengths add---is the image under~$\nu$ of the Cayley graph of the germ derived from the quotient-group~$\mathbb{Z}^n{/}{\equiv_p}$: the (obvious) equality in the case of~$\mathbb{N}^n$ implies the equality in the case of~$M$. 

\section{An example}

For an RC-quasigroup of class~1, that is, satisfying $x \opR y = y$ for all~$x, y$, the group~$G$ is a free Abelian group, the group~$W$ is trivial, and the short exact sequence of Theorem~\ref{Main} reduces to $1 \to \mathbb{Z}^n \to G \to 1$.

Class~2, that is, when $(x \opR x) \opR (x \opR y) = y$ holds for all~$x, y$, is addressed in~\cite{ChG} (with no connection with RC-quasigroups). The element~$\Delta$ is the right-lcm of~$X$, it has $2^n$~divisors which are the right-lcms of subsets of~$X$, and the group~$W$ is the order~$2^n$ quotient of~$G$ obtained by adding the relations $x(x \opR x) = 1$. 

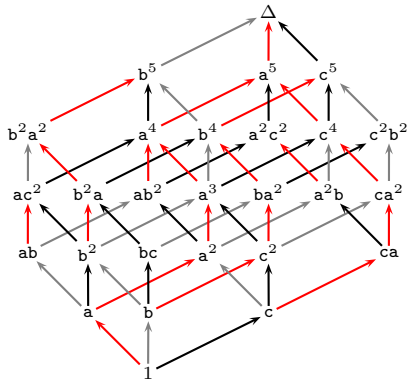
\begin{figure}[b]
\begin{picture}(48,48)(0,2)
\psset{unit=1mm}
\psset{linewidth=0.8pt}
\psset{arrowsize=3pt}
\psset{dotsep=1.5pt}
\psset{dash=2pt 1pt}
\psset{doublesep=0.5pt}
\newpsstyle{a}{linecolor=red}
\newpsstyle{b}{linecolor=gray}
\newpsstyle{c}{linecolor=black}
%\newpsstyle{a}{}
%\newpsstyle{b}{linestyle=dashed}
%\newpsstyle{c}{linewidth=0.4pt,doubleline=true}
\rput(16,0){\rnode[c]{1}{$\scriptstyle1$}}
\rput(8,8){\rnode[c]{a}{$\scriptstyle\tta$}}
\rput(16,8){\rnode[c]{b}{$\scriptstyle\ttb$}}
\rput(32,8){\rnode[c]{c}{$\scriptstyle\ttc$}}
\rput(0,16){\rnode[c]{ab}{$\scriptstyle\tta\ttb$}}
\rput(8,16){\rnode[c]{bb}{$\scriptstyle\ttb^2$}}
\rput(16,16){\rnode[c]{bc}{$\scriptstyle\ttb\ttc$}}
\rput(24,16){\rnode[c]{aa}{$\scriptstyle\tta^2$}}
\rput(32,16){\rnode[c]{cc}{$\scriptstyle\ttc^2$}}
\rput(48,16){\rnode[c]{ca}{$\scriptstyle\ttc\tta$}}
\rput(0,24){\rnode[c]{acc}{$\scriptstyle\tta\ttc^2$}}
\rput(8,24){\rnode[c]{bba}{$\scriptstyle\ttb^2\tta$}}
\rput(16,24){\rnode[c]{abb}{$\scriptstyle\tta\ttb^2$}}
\rput(24,24){\rnode[c]{aaa}{$\scriptstyle\tta^3$}}
\rput(32,24){\rnode[c]{baa}{$\scriptstyle\ttb\tta^2$}}
\rput(40,24){\rnode[c]{aab}{$\scriptstyle\tta^2\ttb$}}
\rput(48,24){\rnode[c]{caa}{$\scriptstyle\ttc\tta^2$}}
\rput(0,32){\rnode[c]{bbaa}{$\scriptstyle\ttb^2\tta^2$}}
\rput(16,32){\rnode[c]{aaaa}{$\scriptstyle\tta^4$}}
\rput(24,32){\rnode[c]{bbbb}{$\scriptstyle\ttb^4$}}
\rput(32,32){\rnode[c]{aacc}{$\scriptstyle\tta^2\ttc^2$}}
\rput(40,32){\rnode[c]{cccc}{$\scriptstyle\ttc^4$}}
\rput(48,32){\rnode[c]{ccbb}{$\scriptstyle\ttc^2\ttb^2$}}
\rput(16,40){\rnode[c]{bbbbb}{$\scriptstyle\ttb^5$}}
\rput(32,40){\rnode[c]{aaaaa}{$\scriptstyle\tta^5$}}
\rput(40,40){\rnode[c]{ccccc}{$\scriptstyle\ttc^5$}}
\rput(32,48){\rnode[c]{Delta}{$\scriptstyle\Delta$}}
\psset{nodesep=0.4mm}
\ncline[style=a]{->}1a
\ncline[style=b]{->}1b
\ncline[style=c]{->}1c
\ncline[style=b]{->}a{ab}
\ncline[style=c]{->}a{bb}
\ncline[style=a]{->}a{aa}
\ncline[style=b]{->}b{bb}
\ncline[style=c]{->}b{bc}
\ncline[style=a]{->}b{cc}
\ncline[style=b]{->}c{aa}
\ncline[style=c]{->}c{cc}
\ncline[style=a]{->}c{ca}
\ncline[style=a]{->}{ab}{acc}
\ncline[style=b]{->}{ab}{abb}
\ncline[style=a]{->}{bb}{bba}
\ncline[style=b]{->}{bb}{aaa}
\ncline[style=c]{->}{bb}{acc}
\ncline[style=b]{->}{bc}{baa}
\ncline[style=c]{->}{bc}{bba}
\ncline[style=a]{->}{aa}{aaa}
\ncline[style=b]{->}{aa}{aab}
\ncline[style=c]{->}{aa}{abb}
\ncline[style=a]{->}{cc}{baa}
\ncline[style=b]{->}{cc}{caa}
\ncline[style=c]{->}{cc}{aaa}
\ncline[style=a]{->}{ca}{caa}
\ncline[style=c]{->}{ca}{aab}
\ncline[style=b]{->}{acc}{bbaa}
\ncline[style=c]{->}{acc}{aaaa}
\ncline[style=a]{->}{bba}{bbaa}
\ncline[style=c]{->}{bba}{bbbb}
\ncline[style=a]{->}{abb}{aaaa}
\ncline[style=c]{->}{abb}{aacc}
\ncline[style=a]{->}{aaa}{aaaa}
\ncline[style=b]{->}{aaa}{bbbb}
\ncline[style=c]{->}{aaa}{cccc}
\ncline[style=a]{->}{baa}{bbbb}
\ncline[style=c]{->}{baa}{ccbb}
\ncline[style=a]{->}{aab}{aacc}
\ncline[style=b]{->}{aab}{cccc}
\ncline[style=a]{->}{caa}{cccc}
\ncline[style=b]{->}{caa}{ccbb}
\ncline[style=a]{->}{bbaa}{bbbbb}
\ncline[style=a]{->}{aaaa}{aaaaa}
\ncline[style=c]{->}{aaaa}{bbbbb}
\ncline[style=a]{->}{bbbb}{ccccc}
\ncline[style=b]{->}{bbbb}{bbbbb}
\ncline[style=c]{->}{aacc}{aaaaa}
\ncline[style=a]{->}{cccc}{aaaaa}
\ncline[style=c]{->}{cccc}{ccccc}
\ncline[style=b]{->}{ccbb}{ccccc}
\ncline[style=b]{->}{bbbbb}{Delta}
\ncline[style=a]{->}{aaaaa}{Delta}
\ncline[style=c]{->}{ccccc}{Delta}
\end{picture}
\caption{An example in class 3: here $W$ has $3^3 = 27$ elements, and its Cayley graph is a cube with edges of length $3-1$.}
\label{Cube}
\end{figure}

For one example in class~3, consider $\{\tta, \ttb, \ttc\}$ with $x \opR y = f(y)$, $f : \tta \mapsto \ttb \mapsto \ttc \mapsto \tta$. The associated presentation is $\PRES{\tta, \ttb, \ttc}{\tta\ttc =\ttb^2, \ttb\tta = \ttc^2, \ttc\ttb = \tta^2}$. The  smallest Garside element is~$\tta^3$, but, here, in class~$3$, we consider the next one, namely~$\Delta =\nobreak \tta^6$. Adding to the above presentation of~$G$ the three relations $x(x\opR x)((x \opR x)\opR(x \opR x)) = 1$, namely $\tta\ttb\ttc = \ttb\ttc\tta = \ttc\tta\ttb = 1$, here reducing to $\tta\ttb\ttc = 1$, one obtains for~$W$ the presentation $\PRES{\tta, \ttb, \ttc}{\tta\ttc =\nobreak \ttb^2, \ttb\tta = \ttc^2, \ttc\ttb = \tta^2, \tta\ttb\ttc = 1}$. The lattice $\Div(\Delta)$ has 27~elements, its diagram is the cube shown on the right. The latter is also the Cayley graph of the germ derived from~$(W, X)$.

\end{document}